\newtheorem{theorem}{Theorem}[section]
\theoremstyle{definition}
\newtheorem{example}[theorem]{Example}
\theoremstyle{remark}
\numberwithin{equation}{section}
\begin{document}
\renewcommand{\arraystretch}{3}
\title[Extended Blumberg-Dieckmann Series]{Extended Blumberg-Dieckmann Series}


\author{Robert Reynolds}
\address[Robert Reynolds]{Department of Mathematics and Statistics, York University, Toronto, ON, Canada, M3J1P3}
\email[Corresponding author]{milver73@gmail.com}
\thanks{}


\subjclass[2020]{Primary  30E20, 33-01, 33-03, 33-04}

\keywords{Product, sum, special functions, contour integral, Catalan's constant, Ap\'{e}ry's constant, definite integral}

\date{}

\dedicatory{}

\begin{abstract}
This paper introduces a set of finite summation formulas and utilize them to establish various functional relationships involving the multivariable Hurwitz-Lerch zeta function. Additionally, the paper examines several examples of these functional relationships.
\end{abstract}

\maketitle
\section{Introduction}
The finite sum of Hurwitz-Lerch zeta functions in the context of an infinite array of line charges has been studied in the field of electrostatics and mathematical physics \cite{blumberg,dieckmann}. This specific topic falls under the broader area of potential theory and the calculation of electrostatic potentials generated by distributions of charges. The study of infinite arrays of line charges involves analyzing the behavior of electric potentials and fields resulting from an infinite sequence of evenly spaced line charges \cite{erik}. The Hurwitz-Lerch zeta function, which is a generalization of the Riemann zeta function, appears in the mathematical representation of these potentials. Researchers have investigated various properties and applications of the finite sum of Hurwitz-Lerch zeta functions in this context. They have explored convergence properties, section (25.14) in \cite{dlmf}, evaluated the sums for specific parameters, derived analytical expressions or approximations, section (25.20) in \cite{dlmf}, and examined the behavior of the potentials as the number of charges or their separation approaches infinity. These studies are important for understanding the behavior of electrostatic fields in periodic systems, such as crystal lattices or periodic structures in condensed matter physics \cite{betermin}. They provide insights into the potential distribution and energy calculations in these systems, aiding in the analysis and design of devices and materials. Specific research papers and academic publications on the finite sum of Lerch functions in the context of infinite arrays of line charges can be found by searching scholarly databases or consulting textbooks and research papers in the field of mathematical physics and electrostatics.\\\\

This paper introduces a table (\ref{tab:SumProdTables}) of formulas similar to a table of integral formulas \cite{bromwich}. The selection of formulas for this table was influenced by their relevance to number theory \cite{apostol}. A finite sum formula represents the sum of a finite series, and the evaluation of such a sum is known as finite summation. To achieve this, general sum formulas are developed and applied, providing closed-form expressions that yield the exact or approximate sum depending on the series type. The paper begins with the presentation of finite summation theory, followed by the development of both general and special finite sum formulas. Finally, the table itself is presented. In the context of studying an infinite array of line charges \cite{dieckmann}, functions involving products of trigonometric functions spaced at regular intervals arise, which are functions of the complex variable $z=x+iy$. It has been demonstrated that these functions can be simplified to a constant multiplied by a multiple angle function. In this work, we apply the contour integral method from \cite{reyn4}, to the finite sum of the secant function given in \cite{hansen, dieckmann}, resulting in
\begin{multline}\label{eq:contour}
\frac{1}{2\pi i}\int_{C}\sum_{j=0}^{2n}(-1)^j a^w w^{-k-1} \sec \left(\frac{\pi  j}{2 n+1}+m+w\right)dw\\
=\frac{1}{2\pi i}\int_{C}(-1)^n (2 n+1) a^w w^{-k-1} \sec ((2 n+1)
   (m+w))dw
\end{multline}
where $a,m,k\in\mathbb{C},Re(m+w)>0,n\in\mathbb{Z^{+}}$. Using equation (\ref{eq:contour}) the main Theorem to be derived and evaluated is given by
\begin{multline}
\sum_{j=0}^{2n}(-1)^j e^{i \left(\frac{\pi  j}{2 n+1}+m\right)} \Phi \left(-e^{2 i
   \left(\frac{\pi  j}{2 n+1}+m\right)},-k,\frac{1}{2} (1-i \log
   (a))\right)\\
   =i^{-k} (-1)^n (2 n+1) (i (2 n+1))^k e^{i m (2 n+1)} \Phi
   \left(-e^{2 i m (2 n+1)},-k,\frac{2 n-i \log (a)+1}{4 n+2}\right)
\end{multline}
where the variables $k,a,m$ are general complex numbers and $n$ is any positive integer. This new expression is then used to derive special cases in terms of trigonometric functions. The derivations follow the method used by us in \cite{reyn4}. This method involves using a form of the generalized Cauchy's integral formula given by
\begin{equation}\label{intro:cauchy}
\frac{y^k}{\Gamma(k+1)}=\frac{1}{2\pi i}\int_{C}\frac{e^{wy}}{w^{k+1}}dw,
\end{equation}
where $y,w\in\mathbb{C}$ and $C$ is in general an open contour in the complex plane where the bilinear concomitant \cite{reyn4} is equal to zero at the end points of the contour. This method involves using a form of equation (\ref{intro:cauchy}) then multiplies both sides by a function, then takes the finite sum of both sides. This yields a finite sum in terms of a contour integral. Then we multiply both sides of equation (\ref{intro:cauchy})  by another function and take the infinite sum of both sides such that the contour integral of both equations are the same.
\section{The Hurwitz-Lerch zeta Function}

We use equation (1.11.3) in \cite{erd} where $\Phi(z,s,v)$ is the Hurwitz-Lerch zeta function which is a generalization of the Hurwitz zeta $\zeta(s,v)$ and Polylogarithm function $\text{Li}_n(z)$. The Lerch function has a series representation given by

\begin{equation}\label{knuth:lerch}
\Phi(z,s,v)=\sum_{n=0}^{\infty}(v+n)^{-s}z^{n}
\end{equation}
where $|z|<1, v \neq 0,-1,-2,-3,..,$ and is continued analytically by its integral representation given by
\begin{equation}\label{knuth:lerch1}
\Phi(z,s,v)=\frac{1}{\Gamma(s)}\int_{0}^{\infty}\frac{t^{s-1}e^{-vt}}{1-ze^{-t}}dt=\frac{1}{\Gamma(s)}\int_{0}^{\infty}\frac{t^{s-1}e^{-(v-1)t}}{e^{t}-z}dt
\end{equation}
where $Re(v)>0$, and either $|z| \leq 1, z \neq 1, Re(s)>0$, or $z=1, Re(s)>1$.
\section{Contour Integral Representation for the Finite Sum of the Hurwitz-Lerch zeta Functions}
In this section we derive the contour integral representations of the left-hand side and right-hand side of equation (\ref{eq:contour}) in terms of the Hurwtiz-Lerch zeta and trigonometric functions.
\subsection{Derivation of the left-hand side contour integral}
We use the method in \cite{reyn4}. Using equation (\ref{intro:cauchy})  we first replace $\log (a)+i (2 y+1)$ and multiply both sides by $2(-1)^y e^{i (2 y+1) \left(\frac{\pi  j}{2 n+1}+m\right)}$ then take the finite and infinite sums over $j\in[0,2n]$ and $y\in [0,\infty)$ and simplify in terms of the Hurwitz-Lerch zeta function to get
\begin{multline}\label{fsci1}
\sum_{j=0}^{2n}\frac{(-1)^j i^k 2^{k+1} e^{i \left(\frac{\pi  j}{2 n+1}+m\right)} }{k!}\Phi \left(-e^{2 i \left(\frac{\pi  j}{2
   n+1}+m\right)},-k,\frac{1}{2} (1-i \log (a))\right)\\\
   =\frac{1}{2\pi i}\sum_{y=0}^{\infty}\sum_{j=0}^{2n}\int_{C}(-1)^y a^w e^{i b (2 y+1) (m+w)}dw\\\\
   =\frac{1}{2\pi i}\int_{C}\sum_{j=0}^{2n}\sum_{y=0}^{\infty}(-1)^y a^w e^{i b (2 y+1) (m+w)}dw\\\\
   =\frac{1}{2\pi i}\int_{C}\sum_{j=0}^{2n}(-1)^j a^w w^{-k-1} \sec \left(\frac{\pi  j}{2
   n+1}+m+w\right)dw
\end{multline}
from equation (1.232.2) in \cite{grad} where $Re(w+m)>0$ and $Im\left(m+w\right)>0$ in order for the sums to converge. We apply Tonelli's theorem for multiple sums, see page 177 in \cite{gelca} as the summands are of bounded measure over the space $\mathbb{C} \times [0,2n] \times [0,\infty)$.
\subsection{Derivation of the right-hand side contour integral}
We use the method in \cite{reyn4}. Using equation (\ref{intro:cauchy})  we first replace $\log (a)+i (2 n+1) (2 y+1)$ and multiply both sides by $2 (-1)^y e^{i m (2 n+1) (2 y+1)}$ then take the finite and infinite sums over  $y\in [0,\infty)$ and simplify in terms of the Hurwitz-Lerch zeta function to get
\begin{multline}\label{fsci2}
\frac{2^{k+1} (-1)^n (2 n+1) (i (2 n+1))^k e^{i m (2 n+1)}}{k!} \Phi \left(-e^{2 i m (2 n+1)},-k,\frac{2 n-i \log
   (a)+1}{2 (2 n+1)}\right)\\\\
   =\frac{1}{2\pi i}\sum_{y=0}^{\infty}\int_{C}2 (-1)^y a^w e^{i (2 n+1) (2 y+1) (m+w)}dw\\\\
   =\frac{1}{2\pi i}\int_{C}\sum_{y=0}^{\infty}2 (-1)^y a^w e^{i (2 n+1) (2 y+1) (m+w)}dw\\\\
   =\frac{1}{2\pi i}\int_{C}(-1)^n (2 n+1) a^w w^{-k-1} \sec ((2 n+1) (m+w))dw
\end{multline}
from equation (1.232.2) in \cite{grad} where $Re(w+m)>0$ and $Im\left(m+w\right)>0$ in order for the sums to converge. We apply Tonelli's theorem for multiple sums, see page 177 in \cite{gelca} as the summands are of bounded measure over the space $\mathbb{C} \times [0,\infty) $.
\section{A functional equation for the Hurwitz-Lerch zeta function and its evaluation}
In this section we will derive and evaluate formulae involving the sum and product of the Hurwitz-Lerch zeta function in terms other Special functions and fundamaental constants.
\begin{theorem}
For all $k,a,m \in\mathbb{C}$ then, 
\begin{multline}\label{fslf}
\sum_{j=0}^{2n}(-1)^j e^{i \left(\frac{\pi  j}{2 n+1}+m\right)} \Phi \left(-e^{2 i
   \left(\frac{\pi  j}{2 n+1}+m\right)},-k,\frac{1}{2} (1-i \log
   (a))\right)\\
   =i^{-k} (-1)^n (2 n+1) (i (2 n+1))^k e^{i m (2 n+1)} \Phi
   \left(-e^{2 i m (2 n+1)},-k,\frac{2 n-i \log (a)+1}{4 n+2}\right)
\end{multline}
\end{theorem}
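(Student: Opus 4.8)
The plan is to read the theorem off directly from the two contour-integral representations already established in Section~3, glued together by the identity (\ref{eq:contour}). First I would recall equation (\ref{fsci1}): it states that the left-hand side of (\ref{fslf}), multiplied by the explicit nonzero constant $i^k 2^{k+1}/k!$, equals $\frac{1}{2\pi i}\int_C \sum_{j=0}^{2n}(-1)^j a^w w^{-k-1}\sec\!\bigl(\tfrac{\pi j}{2n+1}+m+w\bigr)\,dw$. Next I would recall equation (\ref{fsci2}): the right-hand side of (\ref{fslf}), multiplied by $2^{k+1}/k!$, equals $\frac{1}{2\pi i}\int_C (-1)^n(2n+1)\,a^w w^{-k-1}\sec\!\bigl((2n+1)(m+w)\bigr)\,dw$ --- here one uses $2(2n+1)=4n+2$ to see that the third argument of $\Phi$ appearing in (\ref{fsci2}) is exactly the one written in (\ref{fslf}).

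The decisive step is then a single application of (\ref{eq:contour}), which asserts precisely that these two contour integrals are equal. Equating the two left-hand expressions and cancelling the common factor $2^{k+1}/k!$ leaves $i^k$ times the left-hand side of (\ref{fslf}) equal to $(-1)^n(2n+1)(i(2n+1))^k e^{im(2n+1)}\Phi\bigl(-e^{2im(2n+1)},-k,\tfrac{2n-i\log a+1}{4n+2}\bigr)$; dividing through by $i^k$ and writing $i^{-k}$ in front gives (\ref{fslf}) verbatim. No new computation is needed at this stage: the genuine analytic labour --- summing the inner geometric-type series via Gradshteyn--Ryzhik (1.232.2) and justifying the interchange of the finite sum, the infinite sum, and the contour integral via Tonelli's theorem --- was already carried out in establishing (\ref{fsci1}) and (\ref{fsci2}).

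The only delicate point is the domain of validity. The derivations of (\ref{fsci1}) and (\ref{fsci2}) require $\operatorname{Re}(m+w)>0$ and $\operatorname{Im}(m+w)>0$ on the contour for the auxiliary geometric series to converge, which in particular forces $\operatorname{Im} m>0$ --- exactly the region where the Lerch series $\Phi\bigl(-e^{2i(\pi j/(2n+1)+m)},-k,\cdot\bigr)$ converges. Having obtained (\ref{fslf}) on that half-plane, I would close the argument by analytic continuation: both sides, regarded first as functions of $m$ and then of $a$ and $k$, continue analytically off that region through the integral representation (\ref{knuth:lerch1}) of $\Phi$, and two analytic functions agreeing on an open set agree throughout their common domain. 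I expect this continuation step --- together with checking that the contour $C$ can be chosen so that the bilinear concomitant vanishes at its endpoints for general parameters, as in \cite{reyn4} --- to be the main, indeed essentially the only, obstacle; the algebraic heart of the proof is a one-line cancellation.
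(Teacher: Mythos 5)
Your proposal is correct and follows essentially the same route as the paper: the paper's proof likewise observes that the contour integrals on the right of (\ref{fsci1}) and (\ref{fsci2}) coincide by (\ref{eq:contour}), equates the left-hand sides, and cancels the common $2^{k+1}/k!$ (Gamma-function) factor, leaving the $i^{-k}$ exactly as you describe. Your closing remarks on analytic continuation and the vanishing of the bilinear concomitant only make explicit what the paper leaves implicit.
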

\begin{proof}
With respect to equation (\ref{eq:contour}) and observing the addition of the right-hand sides of relation (\ref{fsci1}), and relation (\ref{fsci2}) are identical; hence, the left-hand sides of the same are identical too. Simplifying with the Gamma function yields the desired conclusion.
\end{proof}
\begin{example}
An alternate form: A functional equation for the Hurwitz-Lerch zeta function.
\begin{multline}
\sum_{j=0}^{2n}(-1)^{j \left(\frac{1}{2 n+1}+1\right)} \Phi \left(-e^{i \left(\frac{2
   \pi  j}{2 n+1}+m\right)},-k,a\right)\\
   =(-1)^n (2 n+1)^{k+1} e^{i m n} \Phi
   \left(-e^{i m (2 n+1)},-k,\frac{a+n}{2 n+1}\right)
\end{multline}
\end{example}
\begin{proof}
Use equation (\ref{fslf}) and rewrite using $a=e^{ai}$ and simplify.
\end{proof}
\begin{example}
The Degenerate Case.
\begin{equation}
\sum_{j=0}^{2n}(-1)^j \sec \left(\frac{\pi  j}{2 n+1}+m\right)=(-1)^n (2 n+1) \sec (2 m
   n+m)
\end{equation}
\end{example}
\begin{proof}
Use equation (\ref{fslf}) and set $k=0$ and simplify using entry (2) in Table below (64:12:7) in \cite{atlas}.
\end{proof}
\begin{example}
Hurwitz-Lerch zeta functional equation.
\begin{multline}\label{lfe}
\Phi (z,s,a)\\
=3^{1-s} z \Phi \left(z^3,s,\frac{a+1}{3}\right)-(-1)^{2/3} \Phi \left(-\sqrt[3]{-1}
   z,s,a\right)+\sqrt[3]{-1} \Phi \left((-1)^{2/3} z,s,a\right)
\end{multline}
\end{example}
\begin{proof}
Use equation (\ref{fslf}) and set $n= 1,m= -\frac{1}{2} i \log (-z),a= e^{i (2 a - 1)},k= -s$ and simplify.
\end{proof}
\begin{example}
Hurwitz-Lerch zeta functional equation.
\begin{multline}
\Phi (-z,s,a)\\
=5^{1-s} z^2 \Phi
   \left(-z^5,s,\frac{a+2}{5}\right)+(-1)^{3/5} \Phi \left(\sqrt[5]{-1}
   z,s,a\right)+\sqrt[5]{-1} \Phi \left(-(-1)^{2/5} z,s,a\right)\\
   -(-1)^{2/5}
   \left((-1)^{2/5} \Phi \left((-1)^{3/5} z,s,a\right)+\Phi \left(-(-1)^{4/5}
   z,s,a\right)\right)
\end{multline}
\end{example}
\begin{proof}
Use equation (\ref{fslf}) and set $n= 2,m= \frac{1}{2} i \log (z),a= e^{i (2 a - 1)},k= -s$ and simplify.
\end{proof}
\begin{example}
Special case of the first partial derivative of the Hurwitz-Lerch zeta function in terms of the log-gamma function.
\begin{multline}
\sqrt[3]{-1} \Phi'\left((-1)^{2/3},0,a\right)-(-1)^{2/3}
   \Phi'\left(-\sqrt[3]{-1},0,a\right)
   =\log \left(\frac{2 \pi  3^{\frac{1}{2}-a}
   \Gamma (a)}{\Gamma \left(\frac{a+1}{3}\right)^3}\right)
\end{multline}
\end{example}
\begin{proof}
Use equation (\ref{lfe}) set $z=1$ and take the first partial derivative with respect to $s$ and set $s=0$ and simplify using equation (25.11.18) in \cite{dlmf}.
\end{proof}
\begin{example}
Special case of the first partial derivative of the Hurwitz-Lerch zeta function in terms of the log-gamma function.
\begin{multline}
\sqrt[3]{-1}
   \Phi'\left(-(-1)^{2/3},0,a\right)-(-1)^{2/3}
   \Phi'\left(\sqrt[3]{-1},0,a\right)\\
   =3 \log
   \left(\frac{\sqrt{3} \sqrt[3]{\frac{\Gamma
   \left(\frac{a-2}{2}\right)}{\Gamma \left(\frac{a-1}{2}\right)}} \Gamma
   \left(\frac{a+1}{6}\right)}{\left((a-2) \sqrt{a-1}\right)^{2/3} \Gamma
   \left(\frac{a-2}{6}\right)}\right)+\log (2)
\end{multline}
\end{example}
\begin{proof}
Use equation (\ref{lfe}) set $z=-1$ and take the first partial derivative with respect to $s$ and set $s=0$ and simplify using equation (25.11.18) in \cite{dlmf}.
\end{proof}
\begin{example}
Finite product of the cosine and sine functions in terms of the ratio of the sine and tangent functions.
\begin{multline}
\prod_{j=0}^{2n}\left(\frac{\cos \left(\frac{1}{2} \left(m+\frac{2 j \pi }{1+2 n}+r\right)\right)+\sin
   \left(\frac{m-r}{2}\right)}{\cos \left(\frac{m+2 m n+2 j \pi +r+2 n r}{2+4 n}\right)-\sin
   \left(\frac{m-r}{2}\right)}\right)^{(-1)^j}\\
   =\left(-\frac{\sin \left(\frac{1}{4} (m (2+4 n)+\pi
   )\right)}{\tan \left(\frac{1}{4} (\pi +2 r+4 n r)\right) \sin \left(m \left(\frac{1}{2}+n\right)-\frac{\pi
   }{4}\right)}\right)^{(-1)^n}
\end{multline}
\end{example}
\begin{proof}
Use equation (\ref{fslf}) and form a second equation by replacing $m\to r$ take the difference of both these equations then set $k=-1,a=1$, take the exponential function of both sides and simplify using entry (3) of Section (64:12) in \cite{atlas}.
\end{proof}
\begin{example}
The product of the ratio of trigonometric and exponential of trigonometric functions.
\begin{multline}
\prod_{j=0}^{2n}\exp \left(-2 (-1)^j \sec \left(\frac{j \pi }{1+2 n}+x\right) \sec
   \left(\frac{j \pi }{1+2 n}+\frac{x}{b}\right)\right. \\ \left.
    \sin \left(\frac{(-1+b) x}{2
   b}\right) \sin \left(\frac{1}{2} \left(\frac{2 j \pi }{1+2
   n}+x+\frac{x}{b}\right)\right)\right)\\
    \left(\frac{\left(-1+\sin
   \left(\frac{j \pi }{1+2 n}+x\right)\right) \left(1+\sin \left(\frac{j \pi
   }{1+2 n}+\frac{x}{b}\right)\right)}{\left(1+\sin \left(\frac{j \pi }{1+2
   n}+x\right)\right) \left(-1+\sin \left(\frac{j \pi }{1+2
   n}+\frac{x}{b}\right)\right)}\right)^{\frac{(-1)^j}{2}}\\
   =\exp \left(\frac{2
   (-1)^n (1+2 n) \left(\cos (x+2 n x)-\cos \left(\frac{x+2 n
   x}{b}\right)\right)}{\cos \left(\frac{(-1+b) (x+2 n x)}{b}\right)+\cos
   \left(\frac{(1+b) (x+2 n x)}{b}\right)}\right)\\
    \left(\frac{(-1+\sin (x+2 n
   x)) \left(1+\sin \left(\frac{x+2 n x}{b}\right)\right)}{(1+\sin (x+2 n x))
   \left(-1+\sin \left(\frac{x+2 n
   x}{b}\right)\right)}\right)^{\frac{(-1)^n}{2}}
\end{multline}
\end{example}
\begin{proof}
Use equation (\ref{fslf}) and set $k=1,a=e,m=x$ and simplify using the method in section (8.1) in \cite{reyn_ejpam}.
\end{proof}
\begin{example}
Finite product of the ratio of the sine function. 
\begin{multline}
\prod_{j=0}^{2n}\left(\frac{2}{\sin \left(\frac{\pi  j}{2
   n+1}+\frac{x}{4}\right)+1}-1\right)^{(-1)^j} \left(\frac{\left(\sin
   \left(\frac{\pi  j}{2 n+1}+\frac{x}{2}\right)+1\right)^3 \left(\sin
   \left(\frac{\pi  j}{2 n+1}+x\right)-1\right)}{\left(\sin \left(\frac{\pi 
   j}{2 n+1}+\frac{x}{2}\right)-1\right)^3 \left(\sin \left(\frac{\pi  j}{2
   n+1}+x\right)+1\right)}\right)^{\frac{(-1)^j}{2}}\\
   =\left(\frac{2}{\sin
   \left(\frac{1}{4} (2 n x+x)\right)+1}-1\right)^{(-1)^n}
   \left(\frac{\left(\sin \left(\left(n+\frac{1}{2}\right) x\right)+1\right)^3
   (\sin (2 n x+x)-1)}{\left(\sin \left(\left(n+\frac{1}{2}\right)
   x\right)-1\right)^3 (\sin (2 n x+x)+1)}\right)^{\frac{(-1)^n}{2}}
\end{multline}
\end{example}
\begin{proof}
Use equation (\ref{fslf}) and set $k=1,a=1,m=x$ and simplify using the method in section (8.1) in \cite{reyn_ejpam}.
\end{proof}
\begin{example}
The finite product of the ratio of trigonometric and exponential of trigonometric functions.
\begin{multline}
\prod_{j=0}^{2n}\left(\frac{\left(\sin \left(\frac{\pi  j}{2 n+1}-x\right)-1\right)
   \left(\sin \left(\frac{\pi  j}{2
   n+1}-\frac{x}{2}\right)+1\right)}{\left(\sin \left(\frac{\pi  j}{2
   n+1}-x\right)+1\right) \left(\sin \left(\frac{\pi  j}{2
   n+1}-\frac{x}{2}\right)-1\right)}\right)^{\frac{1}{2} i \pi  (-1)^j}\\
   \left(\frac{\left(\sin \left(\frac{\pi  j}{2
   n+1}+\frac{x}{2}\right)+1\right) \left(\sin \left(\frac{\pi  j}{2
   n+1}+x\right)-1\right)}{\left(\sin \left(\frac{\pi  j}{2
   n+1}+\frac{x}{2}\right)-1\right) \left(\sin \left(\frac{\pi  j}{2
   n+1}+x\right)+1\right)}\right)^{\frac{1}{2} i \pi  (-1)^j}\\
    \exp \left((-1)^j
   \left(-\sec \left(\frac{\pi  j}{2 n+1}-x\right)\right.\right. \\ \left.\left.
   +\sec \left(\frac{\pi  j}{2
   n+1}-\frac{x}{2}\right)+\sec \left(\frac{\pi  j}{2
   n+1}+\frac{x}{2}\right)-\sec \left(\frac{\pi  j}{2
   n+1}+x\right)\right)\right)\\
   =\left(\frac{\left(\sin
   \left(\left(n+\frac{1}{2}\right) x\right)+1\right) (\sin (2 n
   x+x)-1)}{\left(\sin \left(\left(n+\frac{1}{2}\right) x\right)-1\right) (\sin
   (2 n x+x)+1)}\right)^{\frac{1}{2} i \pi  (-1)^n}\\
    \left(\frac{\left(\sin
   \left(\left(n+\frac{1}{2}\right) x\right)-1\right) (\sin (2 n
   x+x)+1)}{\left(\sin \left(\left(n+\frac{1}{2}\right) x\right)+1\right) (\sin
   (2 n x+x)-1)}\right)^{\frac{1}{2} i \pi  (-1)^n}\\
    \exp \left(2 (-1)^n (2 n+1)
   \left(\cos \left(\left(n+\frac{1}{2}\right) x\right)-1\right) \right. \\ \left.
   \left(\sec
   \left(\left(n+\frac{1}{2}\right) x\right)+2\right) \sec (2 n
   x+x)\right)
\end{multline}
\end{example}
\begin{proof}
Use equation (\ref{fslf}) and set $k=1,a=-1,m=x$ and simplify using the method in section (8.1) in \cite{reyn_ejpam}.
\end{proof}
\begin{example}
Finite product involving the ratio of the Gamma function.
\begin{multline}\label{eq:exp_lerch}
\prod_{j=0}^{2n}\exp \left(\frac{(-1)^{j-n} e^{\frac{i \pi  j}{2 n+1}}
   \Phi'\left(-e^{\frac{2 i \pi  j}{2
   n+1}},0,x\right)}{2 n+1}\right)=\frac{1}{\sqrt{2 (1+2 n)}}\frac{\Gamma \left(\frac{n+x}{2+4
   n}\right)}{ \Gamma \left(\frac{1+3 n+x}{2+4
   n}\right)}
\end{multline}
\end{example}
\begin{proof}
Use equation (\ref{fslf}) and set $m=0$ then take the first partial derivative with respect to $k$ and set $k=0,a=e^{ai}$ and simplify using equation (25.11.18) in \cite{dlmf}. Then take the exponential function of both sides and simplify.
\end{proof}
\begin{example}
The first partial derivative of the Hurwitz-Lerch zeta function  in terms of Euler's constant $\gamma$ and $\pi$.
\begin{multline}
\sum_{j=0}^{2n}(-1)^j e^{\frac{i \pi  j}{2 n+1}}
   \Phi'\left(-e^{\frac{2 i \pi  j}{2
   n+1}},1,\frac{1}{2}\right)\\
   =\frac{1}{4} \pi  (-1)^n \left(-2 \log (i (2
   n+1))+i \pi +2 \gamma +\log \left(\frac{64 \pi ^6}{\Gamma
   \left(\frac{1}{4}\right)^8}\right)\right)
\end{multline}
\end{example}
\begin{proof}
Use equation (\ref{fslf}) and take the first partial derivative with respect to $k$ and set $m=0,k=-1,a=1$ and simplify.
\end{proof}
\begin{example}
The first partial derivative of the Hurwtiz-Lerch zeta function  in terms of Catalan's constant $K$ and $\pi$.
\begin{equation}
\sum_{j=0}^{2n}(-1)^j e^{\frac{i \pi  j}{2 n+1}}
   \Phi'\left(-e^{\frac{2 i \pi  j}{2
   n+1}},-1,\frac{1}{2}\right)=\frac{K }{\pi }(-1)^n (2 n+1)^2
\end{equation}
\end{example}
\begin{proof}
Use equation (\ref{fslf}) and take the first partial derivative with respect to $k$ and set $m=0,k=1,a=1$ and simplify using equation (10) in \cite{wolfram} and equation (2.2.1.2.7) in \cite{lewin}
\end{proof}
\begin{example}
The first partial derivative of the Hurwtiz-Lerch zeta function  in terms of  the Glaisher-Kinkelin constant $A$.
\begin{multline}
\sum_{j=0}^{2n}(-1)^j e^{\frac{i \pi  j}{2 n+1}} \Phi'\left(-e^{\frac{2 i \pi  j}{2
   n+1}},-1,n+1\right)=\frac{1}{8} (-1)^n (2 n+1)^2 \log \left(\frac{A^{24}}{4\ 2^{2/3} e^2 (2
   n+1)^2}\right)
\end{multline}
\end{example}
\begin{proof}
Use equation (\ref{fslf}) and take the first partial derivative with respect to $k$ and set $m=0,k=1,a=1$ and simplify using equation (8) in \cite{wolfram}
\end{proof}
\begin{example}
The first partial derivative of the Hurwtiz-Lerch zeta function  in terms of zeta(3) is Ap\'{e}ry's constant $\zeta(3)$.
\begin{equation}
\sum_{j=0}^{2n}(-1)^j e^{\frac{i \pi  j}{2 n+1}} \Phi'\left(-e^{\frac{2 i \pi  j}{2
   n+1}},-2,n+1\right)=\frac{7 (-1)^n (2 n+1)^3 }{4 \pi ^2}\zeta (3)
\end{equation}
\end{example}
\begin{proof}
Use equation (\ref{fslf}) and take the first partial derivative with respect to $k$ and set $m=0,k=1,a=1$ and simplify using equation (9) in \cite{wolfram}
\end{proof}
\begin{example}
Special case of the Hurwitz-Lerch zeta function in terms of the Polylogarithm function.
\begin{multline}\label{polylog}
\sum_{j=0}^{2n}(-1)^j e^{\frac{i \pi  j}{2 n+1}} \Phi \left(-e^{2 i \left(\frac{\pi 
   j}{2 n+1}+m\right)},k,n+1\right)\\
   =-(-1)^n (2 n+1)^{1-k} e^{-2 i m (n+1)}
   \text{Li}_k\left(-e^{2 i m (2 n+1)}\right)
\end{multline}
\end{example}
\begin{proof}
Use equation (\ref{fslf}) and set $k\to -k,a= e^{i (2 n+1)}$ and simplify using equation (25.14.3) in \cite{dlmf}.
\end{proof}
\begin{example}
The first partial derivative of the Polylogarithm function in terms of $\pi$.
\begin{equation}
Li_{0}'\left(\sqrt[3]{-1}\right)+Li_{0}'\left(-(-1)^{2/3}\right)=\log
   \left(\frac{6 \sqrt{3}}{\pi }\right)
\end{equation}
\end{example}
\begin{proof}
Use equation (\ref{polylog}) and set $n=1,m=0$, then take the first partial derivative with respect to $k$ and set $k=0$ and simplify using equations (25.11.2), (25.14.2) and (25.14.3) in \cite{dlmf}. 
\end{proof}
\begin{example}
The first partial derivative of the Polylogarithm function in terms of Glaisher's constant $A$. 
\begin{equation}
Li_{-1}'\left(\sqrt[3]{-1}\right)+Li_{-1}'\left(-(-1)^{2/3}\right)=\log \left(\frac{36\ 2^{2/3}
   \sqrt[4]{3} e^2}{A^{24}}\right)
\end{equation}
\end{example}
\begin{proof}
Use equation (\ref{polylog}) and set $n=1,m=0$, then take the first partial derivative with respect to $k$ and set $k=1$ and simplify using equations (25.11.2), (25.14.2) and (25.14.3) in \cite{dlmf} and equation (8) in \cite{wolfram}.
\end{proof}
\begin{example}
The first partial derivative of the Polylogarithm function in terms of $\log(2)\log(3)$. 
\begin{equation}
Li_{1}'\left(\sqrt[3]{-1}\right)+Li_{1}'\left(1,-(-1)^{2/3}\right)=\log (2) \log (3)
\end{equation}
\end{example}
\begin{proof}
Use equation (\ref{polylog}) and set $n=1,m=0$, then take the first partial derivative with respect to $k$ and take the limit as $k\to -1$ and simplify using equations (25.11.2), (25.14.2) and (25.14.3) in \cite{dlmf} and equation (1) in \cite{wolfram} and the book by Lewin \cite{lewin}.
\end{proof}
\begin{example}
The first partial derivative of the Hurwitz-Lerch zeta function in terms of the log-gamma function. 
\begin{multline}
(-1)^{2/3}
   \Phi'\left(\sqrt[3]{-1},0,a\right)-\sqrt[3]{-1}
   \Phi'\left(-(-1)^{2/3},0,a\right)=\log
   \left(\frac{12 \sqrt{3} \Gamma \left(\frac{a+1}{2}\right) \Gamma
   \left(\frac{a+4}{6}\right)^3}{\Gamma \left(\frac{a}{2}\right) \Gamma
   \left(\frac{a+1}{6}\right)^3}\right)
\end{multline}
\end{example}
\begin{proof}
Use equation (\ref{fslf}) and set $m= -\frac{\pi }{2 n+1},n=1,a=e^{ai}$ and simplify using equation (25.11.18) in \cite{dlmf}. Then take the first partial derivative with respect to $k$ and set $k=0$ and simplify.
\end{proof}
\begin{example}
The Hurwitz-Lerch zeta function in terms of the digamma function.
\begin{multline}
\left(1-i \sqrt{3}\right) \Phi \left(\sqrt[3]{-1},1,a\right)+\left(1+i \sqrt{3}\right) \Phi \left(-(-1)^{2/3},1,a\right)\\
=-\psi ^{(0)}\left(\frac{a}{2}\right)-\psi
   ^{(0)}\left(\frac{a+1}{6}\right)+\psi ^{(0)}\left(\frac{a+1}{2}\right)+\psi ^{(0)}\left(\frac{a+4}{6}\right)
\end{multline}
\end{example}
\begin{proof}
Use equation (\ref{fslf}) and set $k=-1,m= -\frac{\pi }{2 n+1},n=1,a=e^{ai}$ and simplify using equation (25.11.18) in \cite{dlmf}.
\end{proof}
\begin{example}
The finite product of the ratio of trigonometric and exponential of trigonometric functions.
\begin{multline}
\prod_{j=0}^{2n}\left(\frac{\left(\sin \left(\frac{\pi  j}{2
   n+1}+\frac{x}{2}\right)+1\right) \left(\sin \left(\frac{\pi  j}{2
   n+1}+x\right)-1\right)}{\left(\sin \left(\frac{\pi  j}{2
   n+1}+\frac{x}{2}\right)-1\right) \left(\sin \left(\frac{\pi  j}{2
   n+1}+x\right)+1\right)}\right)^{(-1)^j (2 n+1)}\\
    \left(\cos \left(2 e^{i \pi 
   j} \left(\sec \left(\frac{\pi  j}{2 n+1}+\frac{x}{2}\right)-\sec
   \left(\frac{\pi  j}{2 n+1}+x\right)\right)\right)\right. \\ \left.
   -i \sin \left(2 e^{i \pi 
   j} \left(\sec \left(\frac{\pi  j}{2 n+1}+\frac{x}{2}\right)-\sec
   \left(\frac{\pi  j}{2 n+1}+x\right)\right)\right)\right)\\
   =2^{-2 (-1)^n (2
   n+1)} \left(i \sin \left(4 e^{i \pi  n} (2 n+1) \sin ^2\left(\frac{1}{4} (2
   n x+x)\right) \left(\sec \left(\left(n+\frac{1}{2}\right) x\right)+2\right)\right.\right. \\ \left.\left.
   \sec (2 n x+x)\right)+\cos \left(4 e^{i \pi  n} (2 n+1) \sin
   ^2\left(\frac{1}{4} (2 n x+x)\right) \right.\right. \\ \left.\left.
   \left(\sec
   \left(\left(n+\frac{1}{2}\right) x\right)+2\right) \sec (2 n
   x+x)\right)\right) \left(\csc \left(\frac{1}{4} (4 n x+2 x+\pi )\right)\right. \\ \left.
    \sec
   \left(\frac{1}{4} (2 n x+x+\pi )\right) \left(\cos \left(\frac{3}{4} (2 n
   x+x)\right)-\sin \left(\frac{1}{4} (2 n x+x)\right)\right)\right)^{2 (-1)^n
   (2 n+1)}
\end{multline}
\end{example}
\begin{proof}
Use equation (\ref{fslf}) and set $k=1,a= e^{i (2 n+1)},m=x$ and simplify using the method in section (8.1) in \cite{reyn_ejpam}.
\end{proof}
\begin{example}
The finite sum of the ratio of trigonometric functions.
\begin{multline}
\sum_{j=0}^{2n}\frac{e^{i j \left(1+\frac{1}{1+2 n}\right) \pi } }{\left(\cos (2 m)+\cos \left(\frac{2 j \pi }{1+2
   n}\right)\right)^2}\left((1+n) \left(\cos
   (2 m)+\cos \left(\frac{2 j \pi }{1+2 n}\right)\right)-i \sin \left(\frac{2 j
   \pi }{1+2 n}\right)\right)\\
   =\frac{(-1)^n (1+2 n)^2 \sin (2 m (1+n))}{2 \left(\sin (2
   m) \cos ^2(m (1+2) n)\right)}
\end{multline}
\end{example}
\begin{proof}
Use equation (\ref{polylog}) and form a second equation by replacing $m$ by $-m$ and taking their difference. Next set $k=-1$ and simplify. 
\end{proof}
\begin{example}
Finite summation formulae with the first generalized Stieltjes constant at complex argument.
\begin{multline}
\sum_{j=0}^{2n}(-1)^{j \left(\frac{1}{2 n+1}+1\right)}
   \Phi'\left(-e^{\frac{2 i \pi  j}{2
   n+1}},1,a\right)\\
   =\frac{1}{2} (-1)^n \left(-\gamma _1\left(\frac{a+n}{4
   n+2}\right)+\gamma _1\left(\frac{a+3 n+1}{4 n+2}\right)\right. \\ \left.
   +\log (4 n+2)
   \left(\psi ^{(0)}\left(\frac{a+n}{4 n+2}\right)-\psi ^{(0)}\left(\frac{a+3
   n+1}{4 n+2}\right)\right)\right)
\end{multline}
\end{example}
\begin{proof}
Use equation (\ref{fslf}) and set $m=0$ and simplify using equation (25.11.18) in \cite{dlmf}. Next apply l'Hopital's rule as $k\to -1$ and simplify using equation (3) in \cite{blagouchine}.
\end{proof}
\begin{example}
Finite summation formulae with the first and second generalized Stieltjes constant at complex argument.
\begin{multline}
\sum_{j=0}^{2n}(-1)^{j \left(\frac{1}{2 n+1}+1\right)}
   \Phi''\left(-e^{\frac{2 i \pi  j}{2
   n+1}},1,a\right)\\
   =\frac{1}{2} (-1)^n \left(\gamma _2\left(\frac{a+n}{4
   n+2}\right)-\gamma _2\left(\frac{a+3 n+1}{4 n+2}\right)\right. \\ \left.
   +2 \log (4 n+2)
   \left(\gamma _1\left(\frac{a+n}{4 n+2}\right)-\gamma _1\left(\frac{a+3
   n+1}{4 n+2}\right)\right)\right. \\ \left.
   -\left(\left(\log (2 n+1) \log (8 n+4)+\log
   ^2(2)\right) \left(\psi ^{(0)}\left(\frac{a+n}{4 n+2}\right)-\psi
   ^{(0)}\left(\frac{a+3 n+1}{4 n+2}\right)\right)\right)\right)
\end{multline}
\end{example}
\begin{proof}
Use equation (\ref{fslf}) and set $m=0$ and simplify using equation (25.11.18) in \cite{dlmf}. Next take the second partial derivative with respect to $k$ and set $k=1$ and simplify using equation (3) in \cite{blagouchine}.
\end{proof}
\begin{example}
Finite product of the exponential of the Polylogarithm function in terms of the ratio of gamma functions.
\begin{equation}
\prod_{j=0}^{2n}\exp \left(\frac{(-1)^{j-n} e^{-\frac{i \pi  j}{2 n+1}} Li_{0}'\left(-e^{\frac{2 i \pi  j}{2
   n+1}}\right)}{2 n+1}\right)=\sqrt{4 n+2}\left(\frac{ \Gamma \left(\frac{3 n+2}{4 n+2}\right)}{\Gamma \left(\frac{n+1}{4
   n+2}\right)}\right)
\end{equation}
\end{example}
\begin{proof}
Use equation (\ref{fslf}) and set $a=e^{i},m=0$ and simplify. Next take the first partial derivative with respect to $k$ and set $k=0$ and simplify using equation (25.11.8) in \cite{dlmf}. Then take the exponential function of both sides and simplify.
\end{proof}
\begin{example}
Finite sum of the logarithm function in terms of the digamma function.
\begin{multline}
\sum_{j=0}^{2n}(-1)^j e^{-\frac{i \pi  j}{2 n+1}} \log \left(1+e^{\frac{2 i \pi  j}{2 n+1}}\right)\\
=-\frac{1}{2} (-1)^n
   \left(\psi ^{(0)}\left(\frac{n+1}{4 n+2}\right)-\psi ^{(0)}\left(\frac{3 n+2}{4 n+2}\right)\right)
\end{multline}
\end{example}
\begin{proof}
Use equation (\ref{fslf}) and set $a=e^{i},m=0$ and simplify. Next apply l'Hopital's rule as $k\to -1$ and simplify. 
\end{proof}
\begin{example}
Finite sum of the Polylogarithm function in terms of the Hurwitz zeta function.
\begin{multline}
\sum_{j=0}^{2n}-1)^j e^{-\frac{i \pi  j}{2 n+1}} \text{Li}_{-k}\left(-e^{\frac{2 i j \pi }{2 n+1}}\right)\\
=i
   \left(\frac{i}{2}\right)^{-k} (-1)^n (i (2 n+1))^{k+1} \left(\zeta \left(-k,\frac{n+1}{4 n+2}\right)-\zeta
   \left(-k,\frac{3 n+2}{4 n+2}\right)\right)
\end{multline}
\end{example}
\begin{proof}
Use equation (\ref{fslf}) and set $a=e^i,m=0$ and simplify using equation (25.14.2) in \cite{dlmf}.
\end{proof}
\begin{example}
Finite trigonometric Euler form.
\begin{multline}
\prod_{j=0}^{2n}\left(1-\frac{2}{1+\sec \left(\frac{j \pi }{1+2 n}\right) \sin (m)}\right)^{(-1)^j}=-\left(\frac{\sin
   \left(\frac{m}{2}+m n-\frac{\pi }{4}\right)}{\sin \left(\frac{m}{2}+m n+\frac{\pi }{4}\right)}\right)^{2
   (-1)^n}
\end{multline}
\end{example}
\begin{proof}
Use equation (\ref{fslf}) and form a second equation by replacing $m\to -m$ take the difference of both these equations then set $k=-1,a=1$, take the exponential function of both sides and simplify using entry (3) of Section (64:12) in \cite{atlas}. Similar forms are given the the work by Chamberland \cite{chamberland}.
\end{proof}
\begin{example}
Finite sum in terms of a definite integral. 
\begin{multline}
\prod_{j=0}^{2n}\exp \left(\frac{(-1)^{j-n} e^{\frac{i \pi  j}{2 n+1}}
   \Phi'\left(-e^{\frac{2 i \pi  j}{2
   n+1}},0,x\right)}{2 n+1}\right)=\frac{1}{\sqrt{2 \pi } \sqrt{2 n+1}}\int_{0}^{\infty}\frac{z^{\frac{n+x}{4 n+2}-1}}{(z+1)^{\frac{3 n+x+1}{4 n+2}}}dz
\end{multline}
\end{example}
\begin{proof}
Use equation (\ref{eq:exp_lerch}) and equation (2) in \cite{tricomi}, compare the right-hand sides and simplify to achieve the stated result.
\end{proof}
\begin{center}
\begin{table}
\resizebox{15cm}{!} {
\begin{tabular}{ | l | c | r }
\hline
  \multicolumn{2}{|c|}{A short table of formulae} \\
\hline
  Feature & Formula \\
  \hline
  Log-gamma & $\sqrt[3]{-1} \Phi'\left((-1)^{2/3},0,a\right)-(-1)^{2/3}
   \Phi'\left(-\sqrt[3]{-1},0,a\right)
   =\log \left(\frac{2 \pi  3^{\frac{1}{2}-a}
   \Gamma (a)}{\Gamma \left(\frac{a+1}{3}\right)^3}\right)$ \\
  Catalan's constant, $K$ & $\sum_{j=0}^{2n}(-1)^j e^{\frac{i \pi  j}{2 n+1}}
   \Phi'\left(-e^{\frac{2 i \pi  j}{2
   n+1}},-1,\frac{1}{2}\right)=\frac{K }{\pi }(-1)^n (2 n+1)^2$ \\
  Glaisher's constant, $A$ & $\sum_{j=0}^{2n}(-1)^j e^{\frac{i \pi  j}{2 n+1}} \Phi'\left(-e^{\frac{2 i \pi  j}{2
   n+1}},-1,n+1\right)=\frac{1}{8} (-1)^n (2 n+1)^2 \log \left(\frac{A^{24}}{4\ 2^{2/3} e^2 (2
   n+1)^2}\right)$ \\
  Ap\'{e}ry's constant, $\zeta(3)$ & $\sum_{j=0}^{2n}(-1)^j e^{\frac{i \pi  j}{2 n+1}} \Phi'\left(-e^{\frac{2 i \pi  j}{2
   n+1}},-2,n+1\right)=\frac{7 (-1)^n (2 n+1)^3 }{4 \pi ^2}\zeta (3)$ \\
  Logarithm & $Li_{0}'\left(\sqrt[3]{-1}\right)+Li_{0}'\left(-(-1)^{2/3}\right)=\log
   \left(\frac{6 \sqrt{3}}{\pi }\right)$ \\
  Glaisher' constant, $A$ & $Li_{-1}'\left(\sqrt[3]{-1}\right)+Li_{-1}'\left(-(-1)^{2/3}\right)=\log \left(\frac{36\ 2^{2/3}
   \sqrt[4]{3} e^2}{A^{24}}\right)$ \\
  Product Logarithm & $Li_{1}'\left(\sqrt[3]{-1}\right)+Li_{1}'\left(1,-(-1)^{2/3}\right)=\log (2) \log (3)$ \\
  Log-gamma & $(-1)^{2/3}
   \Phi'\left(\sqrt[3]{-1},0,a\right)-\sqrt[3]{-1}
   \Phi'\left(-(-1)^{2/3},0,a\right)=\log
   \left(\frac{12 \sqrt{3} \Gamma \left(\frac{a+1}{2}\right) \Gamma
   \left(\frac{a+4}{6}\right)^3}{\Gamma \left(\frac{a}{2}\right) \Gamma
   \left(\frac{a+1}{6}\right)^3}\right)$ \\
  Gamma function & $\prod_{j=0}^{2n}\exp \left(\frac{(-1)^{j-n} e^{-\frac{i \pi  j}{2 n+1}} Li_{0}'\left(-e^{\frac{2 i \pi  j}{2
   n+1}}\right)}{2 n+1}\right)=\sqrt{4 n+2}\left(\frac{ \Gamma \left(\frac{3 n+2}{4 n+2}\right)}{\Gamma \left(\frac{n+1}{4
   n+2}\right)}\right)$ \\
   Euler product form & $\prod_{j=0}^{2n}\left(1-\frac{2}{1+\sec \left(\frac{j \pi }{1+2 n}\right) \sin (m)}\right)^{(-1)^j}=-\left(\frac{\sin
   \left(\frac{m}{2}+m n-\frac{\pi }{4}\right)}{\sin \left(\frac{m}{2}+m n+\frac{\pi }{4}\right)}\right)^{2
   (-1)^n}$ \\
    Definite integral & $\prod_{j=0}^{2n}\exp \left(\frac{(-1)^{j-n} e^{\frac{i \pi  j}{2 n+1}}
   \Phi'\left(-e^{\frac{2 i \pi  j}{2
   n+1}},0,x\right)}{2 n+1}\right)=\frac{1}{\sqrt{2 \pi } \sqrt{2 n+1}}\int_{0}^{\infty}\frac{z^{\frac{n+x}{4 n+2}-1}}{(z+1)^{\frac{3 n+x+1}{4 n+2}}}dz$ \\
    \hline
\end{tabular}}
\caption{Table of functional equations, sum and product formulae.}
    \label{tab:SumProdTables} 
\end{table}
\end{center}
%
%
%
\section{Conclusion}
This work offers a method that allows for the derivation of finite and infinite sums and product identities in terms of the the Hurwitz-Lerch zeta function and other Special functions. A summary table of some interesting formulae are summarized in Table (\ref{tab:SumProdTables}). For parameters in the derivations comprising real, imaginary, and complex values, the results were numerically confirmed using Wolfram's Mathematica software.
\end{document}